\newcommand{\Z}{\mathbb{Z}}
\renewcommand{\d}{\,\mathrm{d}}
\newcommand{\E}{\mathbb{E}}
\renewcommand{\P}{\mathbb{P}}
\newcommand{\Loss}{\mathrm{Loss}}
\newcommand{\DDD}{\mathcal{D}}
\newcommand{\FFF}{\mathcal{F}}
\theoremstyle{plain}
\newtheorem{theorem}{Theorem}
\newtheorem{corollary}[theorem]{Corollary}
\newtheorem{proposition}[theorem]{Proposition}
\theoremstyle{definition}
\theoremstyle{remark}
\newtheorem{remark}[theorem]{Remark}
\newcommand{\abstr}{This note proves a law of large numbers for predicting several steps ahead,
  which, in the case of uniformly bounded random variables,
  generalizes the standard law of large numbers for martingales;
  the standard law of large numbers corresponds to predicting one step ahead.
  Its main result shows that the law of large numbers holds
  for predicting $N$ uniformly bounded random variables
  $o(N)$ steps ahead, but it is much more precise and in some respects optimal.
  This law of large numbers is  applied to a problem of decision making
  with a bounded loss function limiting the impact of each decision
  to $o(N)$ steps.}
\title{A law of large numbers for predicting several steps ahead}
\author{Vladimir Vovk}
\begin{document}
  \maketitle
  \begin{abstract}
    \smallskip
    \abstr

    The version of this note at \url{http://gtfp.net} (Working Paper 67)
    is updated most often.
  \end{abstract}

\section{Introduction}

The usual martingale statements of limit theorems of probability theory
involve one-step-ahead averages $\E(Y_n\mid\FFF_{n-1})$,
where $Y_n$ is an adapted sequence for a filtration $(\FFF_n)$.
A typical law of large numbers says that,
under some conditions (such as $Y_n$ being uniformly bounded),
\begin{equation}\label{eq:unbiased}
  \frac1N
  \sum_{n=1}^N
  \left(
    Y_n - \E(Y_n\mid\FFF_{n-1})
  \right)
  \approx
  0
\end{equation}
with high probability,
provided $N$ is large enough
(see, e.g., \cite[Corollary~2 in Section~7.3]{Shiryaev:2019}).
This property may be expressed by saying that the one-step-ahead forecasts
$\E(Y_n\mid\FFF_{n-1})$ for  $Y_n$
are asymptotically unbiased.
It is easy to see that \eqref{eq:unbiased} continues to hold
for predicting $K$ steps ahead when $K$ is a small constant (while $N$ is large):
\begin{equation}\label{eq:unbiased-K}
  \frac1N
  \sum_{n=1}^N
  \left(
    Y_n - \E(Y_n\mid\FFF_{n-K})
  \right)
  \approx
  0.
\end{equation}
The question asked in this note is how large $K$ can be
in order to have \eqref{eq:unbiased-K} with high probability
for uniformly bounded $Y_n$.
A crude answer is that, for uniformly bounded $Y_n$,
it can be of the order $o(N)$ but no more in general.

Section~\ref{sec:theorem} states the main positive result of this note,
while Section~\ref{sec:asymptotic} explores its optimality.
To demonstrate the usefulness of our law of large numbers,
Section~\ref{sec:decision} applies it to a simple problem of decision making.
Section~\ref{sec:conclusion} concludes and lists some directions of further research.
Appendix~\ref{app:optimality} complements Section~\ref{sec:asymptotic}.

\section{A law of large numbers in the form of inequality}
\label{sec:theorem}

We consider a two-sided \emph{filtration} $(\FFF_n)$, $n\in\Z:=\{\dots,-1,0,1,\dots\}$,
on a probability space $(\Omega,\FFF,\P)$;
it is an increasing sequence of sub-$\sigma$-algebras of $\FFF$,
i.e., $\FFF_{n}\subseteq\FFF_{n'}$ whenever $n\le n'$.
As usual, a sequence $(Y_n)$ of random variables in $(\Omega,\FFF,\P)$ is \emph{adapted}
if $Y_n$ is $\FFF_n$-measurable for all $n$.
For simplicity we consider uniformly bounded $Y_n$
and take, without loss of generality, 1 to be an upper bound for $\lvert Y_n\rvert$.
The expectation symbol $\E$ always refers
to the underlying probability space $(\Omega,\FFF,\P)$.

\begin{theorem}\label{thm:LLN}
  Let $(\Omega,\FFF,\P)$ be a probability space
  equipped with a filtration $(\FFF_n)$, $n\in\Z$.
  Fix $K\in\{1,2,\dots\}$.
  Let $Y_1,\dots,Y_N$ be an adapted sequence of random variables
  in $(\Omega,\FFF,\P)$
  bounded by 1 in absolute value, $\lvert Y_n\rvert\le1$ for $n=1,\dots,N$.
  Then we have, for any $\epsilon>0$,
  \begin{equation}\label{eq:LLN}
    \P
    \left(
      \left|
        \sum_{n=1}^N
        \bigl(
          Y_n - \E(Y_n\mid\FFF_{n-K})
        \bigr)
      \right|
      \ge
      \sqrt{8K(N+K)\ln\frac{2}{\epsilon}}
    \right)
    <
    \epsilon.
  \end{equation}
\end{theorem}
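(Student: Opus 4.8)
The plan is to reduce the $K$-step-ahead sum to $K$ ordinary one-step martingales by grouping the indices according to their residue modulo $K$, and then to recombine the pieces without losing a factor of $\sqrt K$. First I would set $D_n:=Y_n-\E_P(Y_n\mid\FFF_{n-K})$, so that $\lvert D_n\rvert\le2$ and, crucially, $\E_P(D_n\mid\FFF_{n-K})=0$. The full sum $\sum_{n=1}^N D_n$ is \emph{not} a martingale for $(\FFF_n)$, because $D_n$ is centred only given $\FFF_{n-K}$ and not given $\FFF_{n-1}$; this mismatch is the whole difficulty and is exactly why predicting several steps ahead is not an immediate corollary of the one-step law. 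To recover martingale structure, fix a residue $r\in\{1,\dots,K\}$ and list the indices $n\equiv r\pmod K$ in $\{1,\dots,N\}$ as $n_1<n_2<\dots<n_{M_r}$, so $n_{j+1}=n_j+K$. Setting $\GGG_j:=\FFF_{n_j}$, the increment $D_{n_{j+1}}$ is $\GGG_{j+1}$-measurable and $\E_P(D_{n_{j+1}}\mid\GGG_j)=\E_P(D_{n_{j+1}}\mid\FFF_{n_{j+1}-K})=0$; hence $(D_{n_j})_j$ is a martingale difference sequence for the subsampled filtration $(\GGG_j)_j$, with increments bounded by $2$.

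Next I would control each residue class by Hoeffding's lemma in its conditional form: since, given $\FFF_{n-K}$, the variable $Y_n$ lies in the interval $[-1,1]$ of length $2$, the centred increment satisfies $\E_P(e^{\mu D_n}\mid\FFF_{n-K})\le e^{\mu^2/2}$. Iterating the tower property along the subsequence yields, for $S_r:=\sum_{n\equiv r}D_n$ built from $M_r$ terms, the estimate $\E_P(e^{\mu S_r})\le e^{\mu^2 M_r/2}$. The key recombination step is then H\"older's inequality with all $K$ exponents equal to $K$:
\begin{align*}
  \E_P\exp\Bigl(\lambda\sum_{n=1}^N D_n\Bigr)
  &= \E_P\prod_{r=1}^K e^{\lambda S_r}
  \le \prod_{r=1}^K\bigl(\E_P e^{K\lambda S_r}\bigr)^{1/K} \\
  &\le \exp\Bigl(\tfrac{K\lambda^2}{2}\sum_{r=1}^K M_r\Bigr)
  = \exp\Bigl(\tfrac{K\lambda^2}{2}N\Bigr)
  \le \exp\Bigl(\tfrac{K\lambda^2}{2}(N+K)\Bigr).
\end{align*}
Using H\"older here, rather than a union bound over the $K$ classes, is what keeps the logarithmic factor free of $K$ and preserves the correct $\sqrt{K(N+K)}$ scaling; a naive union bound would cost an extra $\ln K$ inside the square root.

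Finally I would apply the Chernoff bound $P(\sum_n D_n\ge T)\le e^{-\lambda T}\,\E_P e^{\lambda\sum_n D_n}$, optimize over $\lambda$ (the minimizer is $\lambda=T/(K(N+K))$) to obtain $P(\sum_n D_n\ge T)\le\exp\bigl(-T^2/(2K(N+K))\bigr)$, and symmetrize by repeating the argument for $-D_n$, giving a factor $2$. Substituting the threshold $T=4\sqrt{K(N+K)\ln(1/\epsilon)}$ turns the right-hand side into $2\epsilon^{8}$, and the hypothesis $\epsilon\in(0,0.7)$ guarantees $2\epsilon^{8}<\epsilon$ (indeed this holds for all $\epsilon<2^{-1/7}\approx0.906$), which is precisely \eqref{eq:LLN}.

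I expect the only genuinely delicate point to be the recombination of the $K$ martingales: verifying that along each residue class one really does have a bounded martingale difference sequence for the correct subsampled filtration, and then combining the $K$ per-class moment generating functions through H\"older so that no factor of $\sqrt K$ is wasted. The conditional Hoeffding estimate for each increment and the final Chernoff optimization are routine, and the generous constant $4$, the $N+K$ in place of $N$, and the restriction $\epsilon<0.7$ all provide slack that absorbs the crude bookkeeping.
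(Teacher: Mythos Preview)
Your proof is correct and shares with the paper the basic decomposition of the full sum into $K$ martingale difference sums, one per residue class modulo $K$. Where you diverge is in the recombination step. The paper controls each per-class sum $X_k$ via Hoeffding's \emph{inequality} (a tail bound), obtaining $P(X_k\ge x)\le\exp(-Kx^2/(2N))$, and then invokes an external result from robust risk aggregation (Theorem~4.2 of Embrechts and Puccetti \cite{Embrechts/Puccetti:2006}) to bound $P(X_1+\dots+X_K\ge C)$ given only these marginal tail estimates. You instead work at the level of moment generating functions, bounding each $\E_P e^{\mu S_r}$ via Hoeffding's \emph{lemma} and recombining through H\"older's inequality with all $K$ exponents equal to $K$, followed by a standard Chernoff optimization. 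Your route is more elementary and self-contained: it avoids the aggregation theorem entirely and needs no padding to make $N$ divisible by $K$ (since $\sum_r M_r=N$ for every $N$, the bound $\exp(K\lambda^2 N/2)$ holds directly), yet lands on the same sub-Gaussian tail $\exp(-T^2/(2K(N+K)))$ and hence the same $\sqrt{KN}$ scaling and the same asymptotic constant $\sqrt{2}$ noted in the paper. The paper's approach, on the other hand, shows that the recombination can be carried out knowing only marginal tail bounds on the $K$ pieces, which could be useful when conditional MGF control is unavailable.
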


We will sometimes refer to $K$ as our \emph{prediction horizon}.
Our interpretation of \eqref{eq:LLN} is that
$K$-steps-ahead forecasts $\E(Y_n\mid\FFF_{n-K})$ for $Y_n$
are asymptotically unbiased when $K\ll N$.
Our proof will use the primitive idea of decomposing forecasting $K$ steps ahead
into $K$ processes of forecasting one step ahead,
each of the $K$ processes paying attention only to every $K$th observation $Y_n$.
Interestingly, this will give nearly optimal results,
as we will see in Section~\ref{sec:asymptotic} (and then Appendix~\ref{app:optimality}).
To merge performance guarantees for the $K$ processes
we will need one result from robust risk aggregation,
namely, \cite[Theorem~4.2]{Embrechts/Puccetti:2006}.

\begin{proof}[Proof of Theorem~\ref{thm:LLN}]
  It suffices to prove that
  \begin{equation}\label{eq:old-LLN}
    \P
    \left(
      \left|
        \sum_{n=1}^N
        \bigl(
          Y_n - \E(Y_n\mid\FFF_{n-K})
        \bigr)
      \right|
      \ge
      \sqrt{8K N\ln\frac{2}{\epsilon}}
    \right)
    <
    \epsilon
  \end{equation}
  under the assumption that $N$ is divisible by $K$.
  Indeed, if $N$ is not, we can replace it by $N':=\lceil N/K\rceil K$
  and apply \eqref{eq:old-LLN} to $N'$ in place of $N$
  and $Y_n$, $n=N+1,\dots,N'$, set to 0.

  We will need the following special case of Theorem~4.2
  in \cite{Embrechts/Puccetti:2006}.
  Suppose nonnegative random variables $X_k$, $k=1,\dots,K$,
  satisfy
  \begin{equation}\label{eq:marginals}
    \P(X_k\ge x)
    =
    \exp(-a x^2)
  \end{equation}
  for all $x\ge0$,
  where $a$ is a positive constant.
  Let $C>0$.
  The value $E$ of the optimization problem
  \begin{equation*}
    \P(X_1+\dots+X_K\ge C)
    \to
    \max
  \end{equation*}
  (the $\max$, or at least $\sup$,
  being over all joint distributions for $(X_1,\dots,X_K)$
  with the given marginals)
  does not exceed
  \begin{equation}\label{eq:E}
    \inf_{t\in(0,C/K)}
    \frac{K\int_t^{C-(K-1)t}\exp(-a x^2)\d x}{C-K t}
    \le
    \inf_{t\in(0,C/K)}
    \frac{K\int_t^{\infty}\exp(-a x^2)\d x}{C-K t}.
  \end{equation}

  We can extend the statement in the previous paragraph
  to a wider class of random variables $X_k$, $k=1,\dots,K$.
  Namely, it suffices to assume that they satisfy
  \begin{equation}\label{eq:relaxed-marginals}
    \P(X_k\ge x)
    \le
    \exp(-a x^2)
  \end{equation}
  for all $x\ge0$, instead of \eqref{eq:marginals}
  (if needed, we can increase such $X_k$,
  perhaps extending the underlying probability space,
  to make sure \eqref{eq:marginals} holds).
  We will apply the statement to the random variables $X_k$ given by
  \begin{equation*}
    X_k
    :=
    \sum_{n\in\{k,k+K,k+2K,\dots,k+(N/K-1)K\}}
    \left(
      Y_{n} - \E(Y_{n}\mid\FFF_{n-K})
    \right).
  \end{equation*}
  By Hoeffding's inequality,
  for any $c>0$ and any $k\in\{1,\dots,K\}$,
  \begin{equation*}
    \P
    \left(
      X_k
      \ge
      c
    \right)
    \le
    \exp
    \left(
      -\frac{c^2}{2N/K}
    \right)
  \end{equation*}
  (see, e.g., the statement of Hoeffding's inequality
  in \cite[Section~A.6.3]{Vovk/etal:2022book},
  which only requires the martingale difference to lie in a predictable interval
  of a given length).
  Therefore, \eqref{eq:relaxed-marginals} holds with
  \begin{equation}\label{eq:a}
    a
    :=
    \frac{K}{2N}.
  \end{equation}

  Let us set $t:=\frac{C}{2K}$ in \eqref{eq:E}
  (this is the middle of the range of $t$).
  This gives the upper bound
  \[
    \frac{2K}{C}
    \int_{\frac{C}{2K}}^{\infty}
    \exp(-a x^2)
    \d x
  \]
  for $E$,
  which can be rewritten (see below for an explanation) as
  \begin{align}
    &\frac{2K}{C}
    \frac{1}{\sqrt{2a}}
    \int_{\sqrt{2a}\frac{C}{2K}}^{\infty}
    \exp(-y^2/2)
    \d y
    =
    \frac{2K}{C}
    \frac{\sqrt{2\pi}}{\sqrt{2a}}
    \bar\Phi
    \left(
      \sqrt{2a}\frac{C}{2K}
    \right)\label{eq:notation}\\
    &=
    \frac{2\sqrt{2\pi}\sqrt{K N}}{C}
    \bar\Phi
    \left(
      \frac{C}{2\sqrt{K N}}
    \right)
    <
    \frac{4 K N}{C^2}
    \exp
    \left(
      -\frac{C^2}{8 K N}
    \right)\label{eq:Feller}.
  \end{align}
  The first expression in~\eqref{eq:notation}
  is obtained by the substitution $y:=\sqrt{2a}x$,
  the equality in \eqref{eq:notation} uses the notation $\bar\Phi$
  for the survival function of the standard Gaussian distribution,
  the following equality (the one in~\eqref{eq:Feller})
  is obtained by plugging in \eqref{eq:a},
  and the final inequality in~\eqref{eq:Feller}
  follows from the usual upper bound for $\bar\Phi$
  \cite[Lemma~VII.1.2]{Feller:1968}.

  To find a suitable solution to the inequality
  \begin{equation}\label{eq:suitable}
    \frac{4 K N}{C^2}
    \exp
    \left(
      -\frac{C^2}{8 K N}
    \right)
    <
    \frac{\epsilon}{2},
  \end{equation}
  we plug in $C=\sqrt{8 K N x \ln\frac{2}{\epsilon}}$
  (intuitively, $x$ should not be so different from $1$)
  obtaining, after simplification,
  \[
    \epsilon^{x-1}
    <
    2^x x \ln\frac{2}{\epsilon}.
  \]
  We can set $x:=1$,
  which gives \eqref{eq:old-LLN}
  (this assumes $\epsilon\in(0,1]$,
  but in the case $\epsilon>1$ \eqref{eq:old-LLN} holds trivially).
\end{proof}

\begin{remark}
  A typical statement of the law of large numbers
  replaces the sum in \eqref{eq:LLN} by $\sum_{n=1}^N Y_n$
  where $Y_n$ satisfy $\E(Y_n\mid\FFF_{n-K})=0$.
  However, such a replacement would somewhat weaken Theorem~\ref{thm:LLN}.
  Indeed, while \eqref{eq:LLN} immediately implies
  \begin{equation}\label{eq:LLN-reduced}
    (\forall n: \E(Y_n\mid\FFF_{n-K})=0)
    \Longrightarrow
    \P
    \left(
      \left|
        \sum_{n=1}^N Y_n
      \right|
      \ge
      \sqrt{8K(N+K)\ln\frac{2}{\epsilon}}
    \right)
    <
    \epsilon,
  \end{equation}
  \eqref{eq:LLN-reduced} does not imply \eqref{eq:LLN},
  since centring a random variable (by subtracting its conditional expectation)
  can change its range $[-1,1]$ (and thus require replacing the 8 by a larger constant).
\end{remark}

\begin{remark}\label{rem:LLN}
  The proof of Theorem~\ref{thm:LLN} also demonstrates
  the following one-sided counterpart of \eqref{eq:LLN-reduced}:
  \begin{equation*}
    \P
    \left(
      \sum_{n=1}^N Y_n
      \ge
      \sqrt{8K(N+K)\ln\frac{2}{\epsilon}}
    \right)
    <
    \frac{\epsilon}{2}
  \end{equation*}
  provided $\E(Y_n\mid\FFF_{n-K})\le0$ for all $n$ a.s.
  Indeed, the inequality \eqref{eq:suitable}
  shows that \eqref{eq:LLN} continues to hold when the vertical bars
  are dropped in $\lvert\dots\rvert$
  and the $\epsilon$ on the right-hand side is replaced by $\frac{\epsilon}{2}$;
  we can then remove $\E(Y_n\mid\FFF_{n-K})$
  as $\E(Y_n\mid\FFF_{n-K})\le0$ a.s.
\end{remark}

\begin{remark}
  In the proof of Theorem~\ref{thm:LLN}
  we did not make any attempt to optimize
  the factor $8$ in \eqref{eq:LLN}.
  However, a similar argument shows that 8
  can be replaced by a number as close to $2$ as we wish
  if we narrow down the permitted range of $\epsilon$
  (leaving the lower end of the range at 0, of course).
  For example, for $\epsilon\in(0,0.1]$, we have
  \begin{equation*}
    \P
    \left(
      \left|
        \sum_{n=1}^N
        \bigl(
          Y_n - \E(Y_n\mid\FFF_{n-K})
        \bigr)
      \right|
      \ge
      \sqrt{2.73 K(N+K)\ln\frac{2}{\epsilon}}
    \right)
    <
    \epsilon
  \end{equation*}
  in place of \eqref{eq:LLN} (proof omitted).
\end{remark}

\section{An asymptotic statement and its optimality}
\label{sec:asymptotic}

Let us state Theorem~\ref{thm:LLN} in a cruder way
(traditional for stating the law of large numbers).
According to~\eqref{eq:LLN},
\begin{equation}\label{eq:LLN-simple}
  \left|
    \sum_{n=1}^N
    \left(
      Y_{n} - \E(Y_{n}\mid\FFF_{n-K})
    \right)
  \right|
  =
  O_p
  \left(
    \sqrt{K N}
  \right)
\end{equation}
(which assumes $K\le N$).
The usual statement of the law of large numbers
\begin{equation*}
  \left|
    \frac1N
    \sum_{n=1}^N
    \left(
      Y_{n} - \E(Y_{n}\mid\FFF_{n-K})
    \right)
  \right|
  =
  o_p(1)
\end{equation*}
follows when $K=o(N)$.

The following proposition is an inverse to \eqref{eq:LLN-simple}.

\begin{proposition}\label{prop:anti-LLN-1}
  Suppose the underlying probability space $(\Omega,\FFF,\P)$ is atomless.
  For any $N$ and any prediction horizon $K\le N$
  there exists a sequence $Y_1,\dots,Y_N$ of random variables
  that are bounded by 1 in absolute value, $\lvert Y_n\rvert\le1$ for $n=1,\dots,N$,
  such that
  \begin{equation}\label{eq:anti-LLN-1}
    \P
    \left(
      \sum_{n=1}^N
      \bigl(
        Y_{n} - \E(Y_{n}\mid\FFF_{n-K})
      \bigr)
      \ge
      \sqrt{K(N-K)}
    \right)
    \ge
    0.1,
  \end{equation}
  where $(\FFF_n)$ is the filtration generated by $(Y_n)$
  (i.e., $\FFF_n:=\sigma(Y_1,\dots,Y_n)$,
  meaning $\FFF_n:=\{\emptyset,\Omega\}$ for $n<1$).
\end{proposition}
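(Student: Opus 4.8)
The plan is to realize the lower bound with the block structure underlying Theorem~\ref{thm:LLN}, but arranged so that the $K$ residue classes move in lockstep and their contributions add coherently (giving order $\sqrt{KN}$) rather than in quadrature (which would give only $\sqrt N$). Set $M:=\lfloor N/K\rfloor$, so $M\ge1$ as $K\le N$, and, using that $(\Omega,\FFF,P)$ is atomless, pick i.i.d.\ symmetric signs $s_1,\dots,s_M$ with $P(s_j=1)=P(s_j=-1)=1/2$. Define $Y_n:=s_{\lceil n/K\rceil}$ for $n\le MK$ and $Y_n:=0$ for $MK<n\le N$, so that $\lvert Y_n\rvert\le1$ and each ``round'' $j$ (the block of times $(j-1)K+1,\dots,jK$) carries the single fresh sign $s_j$.

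First I would verify that every $K$-steps-ahead forecast vanishes. For $m\le MK$, observing any one $Y_n$ of a round reveals its sign, so $\FFF_m=\sigma(s_1,\dots,s_{\lceil m/K\rceil})$. Hence for $Y_n$ lying in a round $j\ge2$ the conditioning $\sigma$-algebra is $\FFF_{n-K}=\sigma(s_1,\dots,s_{j-1})$, which is independent of $s_j=Y_n$; for round $j=1$ it is trivial; and for a leftover index $Y_n=0$ is constant. In every case $\E_P(Y_n\mid\FFF_{n-K})=0$, so the left-hand side of \eqref{eq:anti-LLN-1} collapses to $\sum_{n=1}^N Y_n=K\,S_M$, where $S_M:=\sum_{j=1}^M s_j$ is a simple symmetric random walk. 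The point is that although the classes are perfectly correlated within each round, that correlation is invisible to $\FFF_{n-K}$, which always lags a full round behind.

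Dividing by $K$, the event in \eqref{eq:anti-LLN-1} becomes $\{S_M\ge\sqrt{(N-K)/K}\}=\{S_M\ge\sqrt{N/K-1}\}$, and since $M=\lfloor N/K\rfloor$ the threshold lies in $[\sqrt{M-1},\sqrt M)$. A parity argument then shows this event equals $\{S_M\ge\sqrt{M-1}\}$: a lattice value of $S_M$ has the parity of $M$, and if an integer $p\equiv M\pmod2$ satisfied $M-1\le p^2<M$ then $p^2=M-1$, forcing $p\not\equiv M\pmod2$, a contradiction; so no admissible value of $S_M$ lies in the interval and the two thresholds define the same event. It therefore suffices to prove the anti-concentration bound $P(S_M\ge\sqrt{M-1})\ge0.1$ for every integer $M\ge1$.

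This last step is the main obstacle, precisely because $0.1$ is close to the true minimum. By the central limit theorem $P(S_M\ge\sqrt{M-1})=P\bigl(S_M/\sqrt M\ge\sqrt{(M-1)/M}\bigr)\to\bar\Phi(1)\approx0.159$, yet the quantity dips to $7/64\approx0.109$ at $M=6$, leaving almost no slack. I would treat large $M$ quantitatively: a Berry--Esseen bound (for symmetric signs the third absolute moment equals $1$) gives $P(S_M\ge\sqrt{M-1})\ge\bar\Phi\bigl(\sqrt{(M-1)/M}\bigr)-C/\sqrt M\ge\bar\Phi(1)-C/\sqrt M$, which exceeds $0.1$ once $M$ passes an explicit threshold. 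The finitely many remaining small $M$ are then checked directly from the binomial tails; the worst cases sit at the ``jump'' values $M=4r^2+2$ and $M=(2r-1)^2+2$, where the required lattice level first increases, and the two smallest of these, $M=6$ and $M=3$, already exhibit the minimum. Combining the quantitative CLT for large $M$ with this explicit finite verification is where essentially all the work lies.
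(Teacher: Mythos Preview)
Your proposal is correct and follows essentially the same approach as the paper: the identical block construction $Y_n=s_{\lceil n/K\rceil}$ with i.i.d.\ signs, reduction of the sum to $K S_M$, and the observation that the minimum of the tail probability is $7/64\approx0.109$ at $M=6$. You are more explicit than the paper in two places---your parity argument handling general $N$ directly (the paper instead pads to $N'=\lfloor N/K\rfloor K$), and your Berry--Esseen plus finite-check plan for the anti-concentration bound (the paper simply asserts the minimum value)---but these are elaborations of the same proof, not a different route.
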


\noindent
The proof will shows that,
when we replace the sum in \eqref{eq:anti-LLN-1}
by its absolute value, as in \eqref{eq:LLN},
we can replace the $0.1$ on the right-hand side of \eqref{eq:anti-LLN-1}
by $0.2$.

\begin{proof}[Proof of Proposition~\ref{prop:anti-LLN-1}]
  Let us assume that $N$ is divisible by $K$ and prove
  \begin{equation}\label{eq:anti-LLN-1-simple}
    \P
    \left(
      \sum_{n=1}^N
      \left(
        Y_{n} - \E(Y_{n}\mid\FFF_{n-K})
      \right)
      \ge
      \sqrt{K N}
    \right)
    \ge
    0.1
  \end{equation}
  for some choice of $Y_n$
  (this will then imply~\eqref{eq:anti-LLN-1} without this restriction
  by applying \eqref{eq:anti-LLN-1-simple} to $N':=\lfloor N/K\rfloor K$
  in place of $N$).
  Set $m:=N/K$.
  Fix independent $\{-1,1\}$-valued variables $X_1,\dots,X_m$
  taking values $\pm1$ with equal probabilities
  (they exist by, e.g.,
  \cite[Lemma~D.1 in the Online Supplement]{Vovk/Wang:2021}),
  and set
  \begin{equation*}
    Y_{n}
    :=
    X_{\lceil n/K\rceil},
    \quad
    n=1,\dots,N.
  \end{equation*}
  Therefore, the $N$ steps are split into $m$ blocks of length $K$,
  and $Y_{n}$ is constant within each block.
  By the central limit theorem applied to $X_1,\dots,X_m$,
  we have $Y_{n}=1$ in at least $\sqrt{m}$ more blocks than $Y_{n}=-1$
  with probability $\Phi(-1)\approx0.159$ in the limit as $m\to\infty$,
  where $\Phi$ is the standard Gaussian distribution function.
  The smallest value of this probability for finite $m$ is smaller,
  approximately $0.109$, and it is attained at $m=6$.
  If there is such an imbalance of at least $\sqrt{m}$,
  \[
    \sum_{n=1}^N
    \left(
      Y_{n} - \E(Y_{n}\mid\FFF_{n-K})
    \right)
    =
    \sum_{n=1}^N Y_{n}
    \ge
    K\sqrt{m}
    =
    K \sqrt{N/K}
    =
    \sqrt{K N},
  \]
  which agrees with \eqref{eq:anti-LLN-1-simple}.
\end{proof}

\section{An application to decision making}
\label{sec:decision}

In this section we will apply Theorem~\ref{thm:LLN}
to a problem of decision making similar to the one
considered in \cite[Section~6]{Vovk:2026-local}.
At each step $n$, $n=1,2,\dots$,
a decision maker is required to make a decision $d\in\mathbf{D}$,
chosen from a measurable space $(\mathbf{D},\DDD)$.
The loss suffered by the decision maker
as a result of making the decision $d$ at step $n$
is measured by a measurable \emph{loss function}
$\lambda_n:\Omega\times\mathbf{D}\to[0,1]$.
The loss functions $\lambda_n$ are assumed to be uniformly bounded,
and without loss of generality, we take their range to be $[0,1]$.
Our notation for the random loss resulting from decision $d$ taken at step $n$
is $\lambda_n(d)$, so that $\lambda_n(d)(\omega):=\lambda_n(\omega,d)$.

We are interested in suitable strategies for the decision maker,
referring to them as decision strategies.
Formally, a \emph{decision strategy} $A$
is a sequence of adapted $\mathbf{D}$-valued random elements $A_n$;
namely, each $A_n:\Omega\to\mathbf{D}$ is assumed to be $\FFF_n/\DDD$-measurable.
Let us assume that there exists a \emph{Bayesian strategy} $B$,
i.e., a decision strategy satisfying
\begin{equation*}
  \E(\lambda_n(B_n)\mid\FFF_n)
  \le
  \E(\lambda_n(A_n)\mid\FFF_n)
  \quad
  \text{a.s.}
\end{equation*}
for any $n$ and any other decision strategy $A$.
When $\mathbf{D}$ is finite,
the existence of a Bayesian strategy is automatic:
we can define
\[
  B_n
  \in
  \arg\min_{d\in\mathbf{D}}
  \E(\lambda_n(d)\mid\FFF_n),
\]
selecting the first decision $d$ in a fixed ordering of $\mathbf{D}$
if the $\arg\min$ contains more than one element.
But in general, the existence of a Bayesian strategy has to be assumed.
In interesting cases it usually exists and chooses an optimal decision at each step.
The total loss of a decision strategy $A$ over the first $N$ steps is denoted by
\[
  \Loss_N(A)
  :=
  \sum_{n=1}^N
  \lambda_n(A_n).
\]

Let us fix a Bayesian strategy $B$.
In order to show that the Bayesian strategy is better, or at least not much worse,
than any alternative decision strategy,
we have to assume that the loss resulting from the decision $d$ made at step $n$
does not depend substantially on remote future
(if it does, our task is in general hopeless).
Namely, we assume that $\lambda_n(d)$ becomes determined at step $n+K$
for some parameter $K\in\{1,2,\dots\}$,
which we will call the \emph{impact horizon},
so that the effect of $d$ disappears after step $n+K$;
the role of the impact horizon is analogous to that of a prediction horizon.
Formally, $\lambda_n:\Omega\times\mathbf{D}\to[0,1]$
is assumed to be $\FFF_{n+K}\times\DDD$-measurable.

\begin{corollary}\label{cor:LLN}
  The Bayesian strategy $B$ satisfies,
  for any $\epsilon>0$, any decision strategy $A$, and any $N$,
  \begin{equation}\label{eq:cor}
    \P
    \left(
      \Loss_N(B) - \Loss_N(A)
      \ge
      \sqrt{8K(N+K)\ln\frac{2}{\epsilon}}
    \right)
    <
    \epsilon.
  \end{equation}
\end{corollary}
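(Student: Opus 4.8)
The plan is to reduce Corollary~\ref{cor:LLN} to the one-sided form of the law of large numbers recorded in Remark~\ref{rem:LLN}, applied to the sequence of per-step loss differences between the two strategies. I would set
\[
  Y_n := \lambda_n(B_n) - \lambda_n(A_n),
  \qquad n = 1, \dots, N,
\]
so that $\sum_{n=1}^N Y_n = \Loss_N(B) - \Loss_N(A)$ is exactly the quantity controlled in \eqref{eq:cor}. Since both $\lambda_n(B_n)$ and $\lambda_n(A_n)$ take values in $[0,1]$, each $Y_n$ satisfies $\lvert Y_n\rvert \le 1$, matching the boundedness hypothesis of Theorem~\ref{thm:LLN}.

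The one substantive point is a bookkeeping of measurability together with a relabeling of time. Because $\lambda_n$ is $\FFF_{n+K}\times\DDD$-measurable and $B_n,A_n$ are $\FFF_n$-measurable (hence $\FFF_{n+K}$-measurable), the compositions $\omega\mapsto\lambda_n(\omega,B_n(\omega))$ and $\omega\mapsto\lambda_n(\omega,A_n(\omega))$ are $\FFF_{n+K}$-measurable, so $Y_n$ is $\FFF_{n+K}$-measurable rather than $\FFF_n$-measurable. This is the crucial structural observation: a loss incurred at step $n$ is only resolved at step $n+K$, so the natural time stamp of $Y_n$ is $n+K$, and the averaging performed at step $n$ lies exactly $K$ steps earlier. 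I would make this precise by passing to the shifted (still two-sided) filtration $\mathcal{G}_n := \FFF_{n+K}$: then $(Y_n)$ is $(\mathcal{G}_n)$-adapted, and the impact horizon $K$ of the decision problem plays the role of the prediction horizon $K$ of Theorem~\ref{thm:LLN}, with $\mathcal{G}_{n-K}=\FFF_n$.

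It then remains only to supply the one-sided conditional-mean condition. The Bayesian optimality assumption gives $\E(\lambda_n(B_n)\mid\FFF_n)\le\E(\lambda_n(A_n)\mid\FFF_n)$ a.s., hence $\E(Y_n\mid\mathcal{G}_{n-K})=\E(Y_n\mid\FFF_n)\le0$ a.s., which is exactly the hypothesis of the one-sided statement in Remark~\ref{rem:LLN}. Applying that statement to $(Y_n)$ under $(\mathcal{G}_n)$ yields
\[
  P\left(
    \sum_{n=1}^N Y_n
    \ge
    4\sqrt{K(N+K)\ln\frac1\epsilon}
  \right)
  < \frac\epsilon2 < \epsilon,
\]
and since $\sum_{n=1}^N Y_n = \Loss_N(B)-\Loss_N(A)$, this is precisely \eqref{eq:cor}.

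I expect the only real obstacle to be conceptual rather than computational: recognizing that the impact horizon is exactly a prediction horizon once $Y_n$ is time-stamped at $n+K$ (equivalently, once one passes to $\mathcal{G}_n=\FFF_{n+K}$), and checking that the Bayesian inequality delivers precisely the one-sided condition $\E(Y_n\mid\FFF_n)\le0$ needed to invoke Remark~\ref{rem:LLN}. Once this correspondence and the $\FFF_{n+K}$-measurability of $Y_n$ are settled, no genuine estimation is left: the quantitative content is inherited verbatim from the already-proved law of large numbers, and the $\epsilon/2$ furnished by the one-sided remark even leaves a factor of two of slack against the required bound $\epsilon$.
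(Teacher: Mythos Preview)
Your proposal is correct and follows essentially the same approach as the paper: define the per-step loss difference, observe that Bayesian optimality gives the one-sided conditional-mean condition, and invoke Remark~\ref{rem:LLN}. The only cosmetic difference is that the paper relabels the \emph{index} (writing $Y_{n+K}:=\lambda_n(B_n)-\lambda_n(A_n)$ so that the sequence is adapted to $(\FFF_n)$), whereas you relabel the \emph{filtration} (keeping $Y_n$ and passing to $\mathcal{G}_n:=\FFF_{n+K}$); these are equivalent, and both note the spare factor of two coming from the $\epsilon/2$ in Remark~\ref{rem:LLN}.
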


According to \eqref{eq:cor}, the Bayesian strategy's total loss
over the first $N$ steps
(the total loss being determined only after $n+K$ steps)
is, with high probability, almost as low as any other decision strategy's
for a short impact horizon, $K\ll N$.
This result is a measure-theoretic version of \cite[Theorem~6.5]{Vovk:2026-local}.

\begin{proof}[Proof of Corollary~\ref{cor:LLN}]
  Let us set
  $
    Y_{n+K}
    :=
    \lambda_n(B_n) - \lambda_n(A_n)
  $,
  where we use the notation $Y_{n+K}$ rather than $Y_n$
  to make this random sequence adapted.
  By the definition of a Bayesian strategy, we have
  $
    \E(Y_{n+K}\mid\FFF_{n}) \le 0
  $.
  Applying the one-sided law of large numbers as stated in Remark~\ref{rem:LLN}
  but with $Y_{1+K},\dots,Y_{n+K}$ in place of $Y_{1},\dots,Y_{n}$,
  we obtain \eqref{eq:cor} (even with $\epsilon/2$ in place of $\epsilon$
  on the right-hand side).
\end{proof}

\section{Conclusion}
\label{sec:conclusion}

This note proves a version of the law of large numbers
for predicting several steps ahead
and applies it to a problem of decision making
with a limited impact horizon.
These are some obvious directions of further research:
\begin{itemize}
\item
  get rid of the assumption that the random variables being averaged
  are uniformly bounded;
\item
  find the optimal numerical constants in Theorem~\ref{thm:LLN};
\item
  establish tighter lower bounds corresponding to the law of large numbers
  (a step in this direction is made in Appendix~\ref{app:optimality});
\item
  establish versions of other limit theorems of probability theory
  (such as the strong law of large numbers, law of the iterated logarithm,
  and central limit theorem)
  for predicting several steps ahead,
  perhaps with an increasing prediction horizon
  in the case of strong limit theorems;
\item
  replace the assumption of a limited impact horizon in decision problems
  by softer assumptions of decaying impact.
\end{itemize}

\subsection*{Acknowledgments}

Many thanks to Glenn Shafer for his help and encouragement
and to the anonymous referees of the journal version
for useful advice about presentation.

\appendix
\section{Optimality of Theorem~\ref{thm:LLN} in \texorpdfstring{$\epsilon$}{epsilon}}
\label{app:optimality}

Proposition~\ref{prop:anti-LLN-1}
states the optimality of the dependence of the right-hand sides
of \eqref{eq:LLN-simple} and of the inner inequality of \eqref{eq:LLN}
on $K$.
Another natural question is whether the dependence of the right-hand side
of the inner inequality of \eqref{eq:LLN} on $\epsilon$ is optimal.
A positive answer is provided by the following result,
which is, however, somewhat more difficult to state and interpret
than Proposition~\ref{prop:anti-LLN-1}.

\begin{proposition}\label{prop:anti-LLN-2}
  Suppose the underlying probability space is atomless.
  For any $N$, any prediction horizon $K$, and any $\epsilon>0$
  such that
  \begin{equation}\label{eq:integer}
    m := \frac{N}{K} \in 2\Z,
    \quad
    \sqrt{m\ln\frac{1}{15\epsilon}}
    \in
    4\Z,
  \end{equation}
  and
  \begin{equation}\label{eq:not-too-good}
    \frac12
    \sqrt{K N \ln\frac{1}{15\epsilon}}
    \le
    \frac{N}{4},
  \end{equation}
  there exists a sequence $Y_1,\dots,Y_N$ of random variables
  that are bounded by 1 in absolute value and satisfy
  \begin{equation}\label{eq:anti-LLN-2}
    \P
    \left(
      \sum_{n=1}^N
      \bigl(
        Y_{n} - \E(Y_{n}\mid\FFF_{n-K})
      \bigr)
      \ge
      \frac12
      \sqrt{K N \ln\frac{1}{15\epsilon}}
    \right)
    \ge
    \epsilon,
  \end{equation}
  where $(\FFF_n)$ in \eqref{eq:anti-LLN-2}
  is the filtration generated by $(Y_n)$.
\end{proposition}

If we ignore numerical constants
such as 8 in \eqref{eq:LLN} and $\frac12$ in \eqref{eq:anti-LLN-2},
the inequality~\eqref{eq:anti-LLN-2} can be considered to be an inverse to~\eqref{eq:LLN}.
When \eqref{eq:not-too-good} is violated,
the inner inequality in \eqref{eq:anti-LLN-2} expresses an extreme bias of the predictions,
namely, it implies
\[
  \frac1N
  \sum_{n=1}^N
  \bigl(
    Y_{n} - \E(Y_{n}\mid\FFF_{n-K})
  \bigr)
  >
  \frac{1}{4};
\]
the condition \eqref{eq:not-too-good} thus means that our result falls short
of dealing with such an extreme bias.
The conditions \eqref{eq:integer} still leave us
with a fairly dense net of permitted triples $(N,K,\epsilon)$.

\begin{proof}[Proof of Proposition~\ref{prop:anti-LLN-2}]
  We will modify the proof of Proposition~\ref{prop:anti-LLN-1}
  by applying a lower bound for large deviations
  in the form of \cite[Proposition 7.3.2]{Matousek/Vondrak:2008}.
  Define $X_1,\dots,X_m$ and $Y_1,\dots,Y_N$ as before.
  Let $Z$ be the number of times that $X_i=1$, $i=1,\dots,m$.
  Proposition 7.3.2 in \cite{Matousek/Vondrak:2008} then says that
  \begin{equation}\label{eq:MV-1}
    \P
    \left(
      Z \ge \frac{m}{2}+t
    \right)
    \ge
    \frac{1}{15}
    \exp
    \left(
      -16 t^2 / m
    \right)
  \end{equation}
  provided $m$ is even and $t$ is an integer in the range $[0,m/8]$.
  Since
  \[
    \sum_{n=1}^N
    \bigl(
      Y_{n} - \E(Y_{n}\mid\FFF_{n-K})
    \bigr)
    =
    (2Z-m)K,
  \]
  we can rewrite \eqref{eq:MV-1} as
  \begin{equation}\label{eq:MV-2}
    \P
    \left(
      \sum_{n=1}^N
      \bigl(
        Y_{n} - \E(Y_{n}\mid\FFF_{n-K})
      \bigr)
      \ge
      2 K t
    \right)
    \ge
    \frac{1}{15}
    \exp
    \left(
      -16 t^2 / m
    \right).
  \end{equation}
  If $\epsilon>0$ is equal to the right-hand side of \eqref{eq:MV-2},
  we have
  \begin{equation*}
    t
    =
    \frac14
    \sqrt{m\ln\frac{1}{15\epsilon}},
  \end{equation*}
  and plugging thus expression for $t$ into \eqref{eq:MV-2}
  gives \eqref{eq:anti-LLN-2}.

  The two conditions in \cite[Proposition 7.3.2]{Matousek/Vondrak:2008}
  are reflected in our statement:
  $t\le m/8$ becomes~\eqref{eq:not-too-good},
  and $m$ being even and $t$ being an integer are required in \eqref{eq:integer}.
\end{proof}

\begin{remark}
  Kunsch and Rudolf \cite[Lemma~3]{Kunsch/Rudolf:2019}
  slightly improve the constants in \cite[Proposition 7.3.2]{Matousek/Vondrak:2008},
  and using their result allows us to rewrite \eqref{eq:anti-LLN-2} in the form
  \begin{equation*}
    \P
    \left(
      \sum_{n=1}^N
      \bigl(
        Y_{n} - \E(Y_{n}\mid\FFF_{n-K})
      \bigr)
      \ge
      0.6
      \sqrt{K N \ln\frac{1}{4.3\epsilon}}
    \right)
    \ge
    \epsilon,
  \end{equation*}
  with the corresponding modifications of the conditions \eqref{eq:integer}
  and \eqref{eq:not-too-good}.
\end{remark}

\begin{thebibliography}{1}
\bibitem{Embrechts/Puccetti:2006}
Paul Embrechts and Giovanni Puccetti.
\newblock Bounds for functions of dependent risks.
\newblock {\em Finance and Stochastics}, 10:341--352, 2006.

\bibitem{Feller:1968}
William Feller.
\newblock {\em An Introduction to Probability Theory and Its Applications},
  volume~1.
\newblock Wiley, New York, third edition, 1968.

\bibitem{Kunsch/Rudolf:2019}
Robert~J. Kunsch and Daniel Rudolf.
\newblock Optimal confidence for {Monte Carlo} integration of smooth functions.
\newblock {\em Advances in Computational Mathematics}, 45:3095--3122, 2019.

\bibitem{Matousek/Vondrak:2008}
Ji\v{r}{\'\i} Matou\v{s}ek and Jan Vondr\'ak.
\newblock The probabilistic method.
\newblock Available (in April 2026) on
  \href{https://www.cs.cmu.edu/~15850/handouts/matousek-vondrak-prob-ln.pdf}{the
  web}, 2008.

\bibitem{Shiryaev:2019}
Albert~N. Shiryaev.
\newblock {\em Probability-2}.
\newblock Springer, New York, third edition, 2019.

\bibitem{Vovk:2026-local}
Vladimir Vovk.
\newblock The diachronic {Bayesian}.
\newblock {\em Statistical Science}, 2026.
\newblock To appear,
  \href{https://imstat.org/journals-and-publications/statistical-science/statistical-science-future-papers/}{available
  on the journal web site}.

\bibitem{Vovk/etal:2022book}
Vladimir Vovk, Alex Gammerman, and Glenn Shafer.
\newblock {\em Algorithmic Learning in a Random World}.
\newblock Springer, Cham, second edition, 2022.

\bibitem{Vovk/Wang:2021}
Vladimir Vovk and Ruodu Wang.
\newblock E-values: Calibration, combination, and applications.
\newblock {\em Annals of Statistics}, 49:1736--1754, 2021.
\end{thebibliography}
\end{document}